\documentclass[11pt,letterpaper]{amsart}
\usepackage{booktabs}
\usepackage{multirow}
\usepackage{array}
\usepackage{mathrsfs}
\usepackage[latin1]{inputenc}
\usepackage{amsfonts}%
\usepackage{physics}
\usepackage{mathrsfs}%
\usepackage[title]{appendix}%
\usepackage{textcomp}%
\usepackage{manyfoot}%
\usepackage{listings}%
\usepackage{amsthm,amssymb,latexsym,amsmath}
\usepackage{color}
\usepackage[all]{xy}
\usepackage{graphicx}%
\usepackage{xcolor}%
\usepackage{algorithm}%
\usepackage{algorithmicx}%
\usepackage{algpseudocode}%
\definecolor{meuvermelho}{RGB}{0,150,0}
\usepackage{ulem}

\definecolor{oro}{rgb}{1.0,0.58,0.05} 

\newcommand{\CC}{{\mathbb C}}

\renewcommand{\dim}{\mathrm{dim}}

\newcommand{\OO}{\mathcal O}

\newcommand{\PP}{\mathbb{P}}

\newcommand{\Sing}{{\rm Sing}}

\newtheorem{lema}{Lemma}[section]

\newtheorem*{teo1}{Theorem 1}
\newtheorem*{teo2}{Theorem 2}
\newtheorem*{teo3}{Theorem 3}
\newtheorem*{teo4}{Theorem 4}
\newtheorem*{teo5}{Theorem 5}

\newtheorem*{cor2}{Corollary 2}

\def\0{{\underline 0}}

\theoremstyle{definition}
\newtheorem{remark}[lema]{Remark}

\newtheorem{exe}[lema]{Example}

\newtheorem*{conj}{Conjecture}

\title{Generic vector fields on isolated complex hypersurface germs}
\author{Diogo da Silva Machado and Jose Seade}
\date{}

\begin{document}

\maketitle

\begin{abstract} 
We study  holomorphic vector fields  on isolated hypersurface singularities and derive global obstructions to the existence of holomorphic vector fields on compact singular varieties. For a hypersurface germ $(V,\0)$ with an isolated singularity, we characterize the generic elements in the space of holomorphic vector fields with isolated singularity in terms of the GSV-index. Letting $\tau(V,\0)$ denote the Tjurina-Greuel number, we  prove that the minimal possible index is bounded below by $1+(-1)^{\dim(V)}\tau(V,\0)$.  We further prove that equality holds if the vector field admits an extension to $\mathbb{C}^{n+1}$ with a nondegenerate singularity at $\0$ and , in the case $n$ is odd, that such extensions, when they exist, form an open dense subset of the set of vector fields with an isolated singularity at $\0$. This yields a description of the generic vector fields on weighted homogeneous hypersurface germs. As a consequence, we obtain a characterization of weighted homogeneous hypersurface germs. Also, as applications to singular hypersurfaces in complex manifolds, we derive constraints on compact singular varieties admitting holomorphic vector fields. In particular, we show that an irreducible compact singular complex curve carrying a nontrivial holomorphic vector field with zeros is rational and has at most two singular points. We further prove that, for singular surfaces in K{\"a}hler 3-folds satisfying suitable positivity assumptions on the adjoint line bundle, the geometric genus is greater of equal than  the irregularity.
\end{abstract}

\section*{Introduction}
It is well known (see, e.g., \cite{bonatti}) that if $(V,\0)$ is the germ of a complex analytic hypersurface in $\mathbb{C}^{n+1}$ with an isolated singularity, then the space $\widehat{\Theta}(V,\0)$ of germs of holomorphic vector fields on $V$ is infinite dimensional, and the subset $\Theta(V,\0) \subset \widehat{\Theta}(V,\0)$ consisting of vector fields with an isolated singularity at $\0$ is open and dense.
The aim of this article is to characterize the generic elements of $\Theta(V,\0)$. 

For instance, if the germ $(V,\0)$  is that of $\CC^{n+1}$ at $\0$, then the Poincar\'e-Hopf index of an element $v = (v_1,...,v_{n+1}) \in \Theta{(\CC^{n+1},\0)}$
is the intersection number:
$$
\dim_{\CC} \; \frac {\mathcal O_{{n+1},\0}}{\langle v_1,...,v_{n+1} \rangle} \,.
$$
Hence this index is positive and the vector fields with index 1 form an open and dense set in $\Theta{(\CC^{n+1},\0)}$. These are the vector fields with a non-degenerate singularity at $\0$ (i.e., vector fields whose differential is non-singular at $\0$).

We recall that given a hypersurface germ $(V,\0)$ as above, one has the  GSV-index of vector fields \cite{GSV}, which can be defined as the Poincar\'e-Hopf index of a continuous extension of the vector field to a local Milnor fiber. 

For instance, if $V$ is homogeneous, then the radial vector field $v(z) = (z_1, ...., z_{n+1})$   in $\CC^{n+1}$ leaves $V$ invariant and is pointing outwards from every small sphere ${\mathbb S_{\varepsilon}}$ centered at $\0$. This means that restricted to the boundary of a Milnor fiber  $F_V$ we can assume that deforming it slightly, this becomes tangent to $F_V$ near its boundary, which is isotopic to the link $L_V := V \cap {\mathbb S_{\varepsilon}}$ . Hence, by \cite[Theorems 6.5 and 7.2 ]{mimi} and the theorem of Poincar\'e-Hopf for manifolds with boundary, one has that the GSV index of its restriction to $V$ is $1 + (-1)^n \mu$, where $\mu$ is the Milnor number of the germ $(V,\0)$, which is 
$$\mu := \mu_\0(V) \, = \, \dim_{\CC} \; \frac {\mathcal O_{{n+1},\0}}{J(f)} \;, $$
where 
$$J(f) = 
\langle \partial f/ \partial z_1, ...., \partial f/\partial z_{n+1}\rangle$$
is the Jacobian ideal of a map $f$ that defines the germ $(V,\0)$. Hence $\mu \ge 1$ unless the germ is non-singular, and it is $1$ 
 only when $V$ has degree 2. 

In particular, if $n$ is odd, the index may be negative, even for holomorphic vector fields. However, by \cite[Theorem~2.2]{bonatti} (cf.~\cite[Theorem~5.3]{seade}), for any hypersurface germ $(V,\0)\subset(\mathbb{C}^{n+1},\0)$ as above, there exists a maximal constant $K$, depending only on $(V,\0)$, such that every holomorphic vector field germ $v$ on $V$ satisfies:
$$
{\rm Ind}_{\rm GSV}(v,(V,\0))\geq K.
$$
\noindent Furthermore, the holomorphic vector fields on $V$ with the smallest index form a dense open subset in the space $\Theta{(V,\0)}$ of germs of vector fields on $(V,\0)$.

The question of determining the value of $K$ arises naturally; in this work, we provide several characterizations of this constant (see, e.g., Lemma \ref{lem001}).

\hyphenation{equi-va-lent}

Let $f\in\mathcal{O}_{(\CC^{n+1},\0)}$ be a holomorphic germ that defines $(V,\0)=\{f=0\}$. Recall that its Greuel-Tjurina number was defined in  \cite{ggel} as  
$$\tau_\0(V)=\dim_\mathbb{C} \mathcal \,\frac{\mathcal{O}_{{n+1},\0}}{\langle f,J(f)\rangle} \,.$$
For an isolated singularity, $\tau_\0(V)\le\mu_{\0}(V)$, with equality iff $f$ is analytically equivalent to a weighted homogeneous polynomial \cite{saito}.
We prove:

\begin{teo1}\label{Teo 1}
 Let $(V,\0)\subset(\mathbb{C}^{n+1},\0)$ be a hypersurface germ with an isolated singularity, and let $\Theta=\Theta{(V,\0)}$ be the space of holomorphic vector field germs on $V$ having an isolated singularity at $\0$. If $n=1$, assume $V$ irreducible. Denote by 
${\rm Ind}_{\rm GSV}(v,(V,\0))$ the {\rm GSV}-index. Then: 
\begin{itemize}
\item[(i)] For all $v\in\Theta$,
\[
\mathrm{Ind}_{\rm GSV}(v,(V,\0))\ge 1+(-1)^n\tau_\0(V).
\]

\item[(ii)] If $v\in\Theta$ extending holomorphically to a neighborhood of $\0$ in $\mathbb{C}^{n+1}$ with a nondegenerate isolated singularity at $\0$, then 
\[
\mathrm{Ind}_{\rm GSV}(v,(V,\0))=1+(-1)^n\tau_\0(V).
\]
\noindent Moreover, in the case where n is odd, the converse also holds. Also, if such a $v$ exists, then the set of elements in 
$\Theta$ that extend holomorphically to a neighborhood of $0$ in $\mathbb{C}^{n+1}$
 with a nondegenerate singularity at $\0$, 
is open and dense in $\Theta$.
\end{itemize}
 \end{teo1}

We formulate the following conjecture that we prove in part.

\begin{conj}
The following statements are equivalent:
\begin{itemize}
\item [(a)] $(V,\0)$ is weighted homogeneous germ.
\item [(b)] There exists a holomorphic vector field on the germ $(V,\0)$ which is everywhere
transversal to the link $L_V$.
\item [(c)] There exists a holomorphic vector field that extends to a neighborhood of
the origin in the ambient space with a non-degenerate isolated singularity
at \0.   
\end{itemize}
\end{conj}

Implications $(a)\Longrightarrow(b)$ and $(a)\Longrightarrow(c)$  are well known:
if $(V,\0)$ is a weighted homogeneous germ, then one has a ${\mathbb C}^*$-action 
which defines a holomorphic vector field on the ambient space with a nondegenerate singularity at $\0$, and the restriction of this vector field to $V$ is everywhere transversal to the link. In particular,  item (ii) of Theorem 1 apply to all weighted homogeneous singularities.  

\hyphenation{cha-rac-te-ri-za-tion}

Theorem 2 is an immediate consequence of Theorem 1. It provides a characterization of weighted homogeneous singularities and partially answers the conjecture:

\begin{teo2} Let $(V,\0)$ be a hypersurface germ in $\CC^{n+1}$.
\begin{itemize}
\item [(i)] For even n: if there exists a holomorphic vector field that extends to a neighborhood of
the origin in the ambient space with a non-degenerate isolated singularity
at $\0$, then $(V,\0)$ is weighted homogeneous.   

\item [(ii)] For odd n: if there exists a holomorphic vector field on the germ $(V,\0)$ which is everywhere
transversal to the link $L_V$, then $(V,\0)$ is weighted homogeneous.   
\end{itemize}
\end{teo2}
To prove the conjecture it remains to show that the two statements in Theorem~2 actually are equivalent in all dimensions.

In Section \ref{global} we turn to compact singular varieties. The local theory developed in the previous sections highlights that the existence of holomorphic vector fields is strongly constrained by the geometry of the singularities.

From a global perspective, it is well known that compact complex manifolds generally do not admit nontrivial global holomorphic vector fields (cf.\ \cite[Theorem~1]{rell}). However, the methods of \cite{rell} do not extend directly to singular spaces, since the complex of sheaves of regular holomorphic forms is no longer isomorphic to the classical Koszul complex. Using the local results obtained above, we derive several obstructions to the existence of global holomorphic vector fields on compact singular varieties.

We then turn to singular varieties of dimensions $1$ and $2$ in Section~\ref{s. low}. For smooth compact surfaces, Carrell {\it et al} \cite[Theorem 1]{rell2},  classified smooth complex surfaces admitting a nontrivial holomorphic vector field with zeros. The singular case is substantially subtler.

We prove for curves  that an irreducible compact singular complex curve admitting a nontrivial holomorphic vector field with zeros is rational and has at most two singular points. 

In dimension 2 we show that if a surface $V$ with isolated singularities embedded in a compact smooth K{\"a}hler threefold admits such a vector field, then
$$
g(V)=\dim H^2(V,\mathcal O_V)
\geq
\dim H^1(V,\mathcal O_V)=q(V); 
$$
That is, its geometric genus is larger than or equal to its irregularity.

\section{The main  lemma}

To prove Theorem 1 we use the following lemma.

\begin{lema}\label{lem001}
Under the conditions of Theorem 1, one has 
\begin{itemize}
\item [(a)]   $K \geq 1 + (-1)^n\tau_\0(V)$; 
\item[(b)] If some holomorphic vector field on $V$ extends holomorphically near $\0$ in the ambient space with a nondegenerate isolated singularity at $\0$, then 
$$
K=1+(-1)^n\tau_\0(V).
$$
\item[(c)] Suppose that $n$ is odd. Then, $K=1-\tau_\0(V)$ iff some holomorphic vector field on $V$ extends holomorphically near $\0$ in the ambient space with a nondegenerate isolated singularity at $\0$; equivalently, this holds precisely for those $v\in\Theta{(V,0)}$ with ${\rm Ind}_{\rm GSV}(v,(V,\0))=1-\tau_\0(V)$.
\end{itemize}
\end{lema}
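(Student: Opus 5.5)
The plan is to reduce everything to an algebraic formula for the GSV-index of a holomorphic vector field in terms of an ambient extension. Then (a), (b), (c) follow by bounding the terms of that formula.

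\emph{Setup and index formula.} Let $v\in\Theta(V,\0)$ and choose a holomorphic extension $X=(X_1,\dots,X_{n+1})$ to a neighbourhood of $\0$ in $\CC^{n+1}$. Tangency to $V=\{f=0\}$ gives $X(f)=hf$ for some $h\in\mathcal O_{n+1,\0}$. I would invoke the Gómez-Mont type formula relating the GSV-index to the ambient data. One route is through the homological index $\mathrm{Ind}_{\rm hom}$, which on an isolated hypersurface germ differs from $\mathrm{Ind}_{\rm GSV}$ by a constant depending only on $(V,\0)$. The target is to write
\[
\mathrm{Ind}_{\rm GSV}(v,(V,\0))=\dim_\CC\frac{\mathcal O_{n+1,\0}}{\langle X_1,\dots,X_{n+1}\rangle}+(-1)^n\tau_\0(V)+(-1)^n\delta(X),
\]
where $\delta(X)\ge 0$ is a defect length. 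I expect $\delta(X)$ to measure the failure of $\langle f,X_1,\dots,X_{n+1}\rangle$, resp.\ of $h$, to be as small as possible modulo $\langle f,J(f)\rangle$. The constant is normalised by the homogeneous radial example from the introduction, where the index equals $1+(-1)^n\mu$ and $\tau=\mu$.

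\emph{Part (a).} Since $v$ has an isolated zero, one can choose $X$ with an isolated zero, so $\dim\mathcal O_{n+1,\0}/\langle X\rangle\ge 1$.
\begin{itemize}
\item For $n$ even, the claim $K\ge 1+\tau_\0(V)$ follows directly from the formula, using $\delta(X)\ge 0$.
\item For $n$ odd, the sign of $\delta$ works against us. So the real content is the inequality
\[
\dim\mathcal O_{n+1,\0}/\langle X\rangle-\delta(X)\ge 1.
\]
I would prove it by comparing the ideal $\langle X_1,\dots,X_{n+1}\rangle$ with $\langle f,J(f)\rangle$ via $X(f)=hf$. Concretely, the Koszul complex of $X$ restricted to $\mathcal O_V$ gives an exact-sequence estimate: the extra colength of $\langle X\rangle$ dominates the defect term.
\end{itemize}
The restriction "$V$ irreducible if $n=1$" enters in (a) for $n=1$: it guarantees that $\mathcal O_V$ is a domain. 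This is needed in the Koszul/length computation and rules out branch-swapping pathologies.

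\emph{Parts (b) and (c).} If $X$ is nondegenerate, then $\langle X\rangle=\mathfrak m$ and the colength is $1$. One then checks $\delta(X)=0$, since a nondegenerate $X$ gives the "Euler-like" relation placing $f$ in the ideal $\langle J(f)\rangle$ up to units, mirroring Saito's argument. Hence the index equals $1+(-1)^n\tau$, and combined with (a) this gives $K=1+(-1)^n\tau$. For (c), with $n$ odd, suppose $\mathrm{Ind}_{\rm GSV}(v)=1-\tau$. Then equality must hold in the key inequality of (a), which forces $\dim\mathcal O_{n+1,\0}/\langle X\rangle=1$ for a suitable choice of extension. Making that choice requires freedom to modify $X$ by multiples of $f$ times vectors. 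This makes $X$ nondegenerate, and the converse is (b).

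I expect the main obstacle to be establishing the index formula with a correctly signed, nonnegative defect term. Within that, the odd-$n$ inequality in (a) is the most delicate point, since it needs a precise comparison of colengths rather than a crude bound. I would first try to derive the formula from Gómez-Mont's computation of the homological index via the Koszul complex on $\mathcal O_V$.
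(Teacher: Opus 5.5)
Your overall route (G\'omez-Mont's formula, then bounding its terms) is the paper's. But you have put the difficulty in the wrong parity, and the even case fails as you set it up. Also, the homological index equals the GSV-index for hypersurfaces exactly, not up to a constant.

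\textbf{The even case.} For $n$ even, G\'omez-Mont's formula reads
\[
\mathrm{Ind}_{\rm GSV}(v,(V,\0))=\dim_{\CC}\frac{\OO_{n+1,\0}}{\langle X_1,\ldots,X_{n+1}\rangle}-\dim_{\CC}\frac{\OO_{n+1,\0}}{\langle h,X_1,\ldots,X_{n+1}\rangle}+\tau_\0(V).
\]
So the correction term is $\le 0$, not $\ge 0$. In the paper's own example $f=z^2+x^3+y^7+xy^5$ one has colength $18$ for $\langle X\rangle$, index $18$ and $\tau_\0(V)=11$, so your $\delta$ would be $-11$. Hence $K\ge 1+\tau_\0(V)$ does not follow ``directly''.

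What must be proved is the strict inequality $\dim_{\CC}\OO_{n+1,\0}/\langle X\rangle>\dim_{\CC}\OO_{n+1,\0}/\langle h,X\rangle$, i.e.\ $h\notin\langle X_1,\ldots,X_{n+1}\rangle$. This genuinely needs $n+1$ odd: for $f=xy+zw$ and $X=x\partial_x-y\partial_y+z\partial_z-w\partial_w$ one gets $h=0$.

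The missing idea is the paper's argument, in four steps:
\begin{enumerate}
\item Suppose $h=\sum A_iX_i$. Then $G=\nabla f-fA$ satisfies $\sum_i X_iG_i=0$.
\item $G$ has an isolated zero. Along a curve $\gamma$ in $Z(G)$ with $\gamma(0)=\0$ one has $(f\circ\gamma)'=(f\circ\gamma)\,\langle A(\gamma),\gamma'\rangle$ and $f(\gamma(0))=0$. Hence $f\circ\gamma\equiv 0$, so $\nabla f(\gamma)\equiv 0$, forcing $\gamma\equiv\0$.
\item Exactness of the Koszul complexes of the regular sequences $G$ and $X$ gives $X=MG$ and $G=M'X$, with $M,M'$ skew-symmetric.
\item Bring $M$ to the form $J_r\oplus N$ with the entries of $N$ in $\mathfrak m$. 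The block $N$ is nonempty precisely because $n+1$ is odd. Nakayama modulo the ideal generated by the first $2r$ transformed components then puts all of $\langle X\rangle$ into an ideal generated by $2r\le n$ elements. This contradicts the isolated zero of $X$.
\end{enumerate}

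\textbf{Consequence for (b).} The same gap affects (b) for $n$ even. Nondegeneracy gives colength $1$ for $\langle X\rangle$. Only the strict inequality then forces $\langle h,X\rangle=\OO_{n+1,\0}$, i.e.\ $h(\0)\neq 0$. Your ``Euler-like relation mirroring Saito'' runs backwards: $f\in J(f)$ is a consequence of $h(\0)\ne 0$, not a route to it. The $n=3$ example above shows that nondegeneracy alone does not give $h(\0)\neq0$.

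\textbf{The odd case and (c).} The odd-$n$ inequality you single out as delicate is trivial. Indeed
\[
\dim_{\CC}\OO_{n+1,\0}/\langle X\rangle-\delta(X)=\dim_{\CC}\OO_{n+1,\0}/\langle f,X_1,\ldots,X_{n+1}\rangle\ge 1,
\]
because $f$ and all $X_i$ vanish at $\0$, so the ideal is proper. No Koszul estimate on $\OO_V$ is needed, and irreducibility plays no role there.

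Likewise, in (c) you need no special choice of extension. Colength $1$ means $\langle f,X_1,\ldots,X_{n+1}\rangle=\mathfrak m$. Since $\0$ is singular, $f\in\mathfrak m^2$, so Nakayama gives $\langle X_1,\ldots,X_{n+1}\rangle=\mathfrak m$ for every extension.
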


\noindent {\it Proof.} \underline{item (a):} Let $v$ be a holomorphic vector field on $V$ with an isolated singularity at $\0$. There exist infinitely many holomorphic extensions of $v$ to a neighborhood of $\0$ in the ambient space $\mathbb{C}^{n+1}$, each with an isolated singularity (see \cite{bonatti}). For hypersurface germs, the GSV index coincides with the homological index \cite{a08}. Hence, if $\widetilde{v}=(v_1,\ldots,v_{n+1})$ is a holomorphic extension of $v$, then \cite[Theorem~1]{a08} provides algebraic formulas for this index in terms of the coordinates of $\widetilde{v}$, depending on the parity of $n$.

\medskip

\noindent For odd $n$:
\begin{eqnarray}\label{eq88}
\mathrm{Ind}_{\rm GSV}(v,(V,\0)) = \displaystyle {\rm dim}_{\CC}\frac{\OO_{n+1,\0}}{\left\langle f,v_1,\ldots, v_{n+1}\right\rangle} - \tau_\0(V).
\end{eqnarray}
\noindent where $\OO_{n+1,\0}$ denotes the ring of germs of holomorphic functions at $(\CC^{n+1}, \0)$ and $ f = 0 $ is a local equation for $V$ in a neighborhood of $\0$ in $\CC^{n+1}$.

We observe that
\begin{equation}\label{ne}
\dim_{\mathbb{C}}\frac{\mathcal{O}_{n+1,\0}}{\langle f, v_1,\ldots, v_{n+1}\rangle} \ge 1.
\end{equation}
Indeed, if
\[
{\rm dim}_{\mathbb{C}}\frac{\mathcal{O}_{n+1,\0}}{\langle f, v_1,\ldots, v_{n+1}\rangle}=0,
\]
then $\langle f, v_1,\ldots, v_{n+1}\rangle=\mathcal{O}_{n+1,\0}$. Consequently, there would exist germs $g_0, g_1,\ldots, g_{n+1}$ in $\mathcal{O}_{n+1,\0}$ such that
\[
1 = g_0 f + \sum_{i=1}^{n+1} g_i v_i,
\]
which contradicts the fact that \(\0\in \operatorname{Sing}(\widetilde v)\cap \operatorname{Sing}(V)\), since
\[
f(\0) = v_{1}(\0) = \ldots = v_{n+1}(\0) = 0.
\]
Therefore,
\[
\mathrm{Ind}_{\rm GSV}(v,(V,\0)) \ge 1-\tau_\0(V).
\]

\noindent For even $n$:
\begin{eqnarray}\label{eq888}
\mathrm{Ind}_{\rm GSV}(v,(V,\0)) =  & &\displaystyle {\rm dim}_{\CC}\frac{\OO_{n+1,\0}}{\left\langle v_1,\ldots, v_{n+1}\right\rangle} - \\\nonumber  &-& {\rm dim}_{\CC}\frac{\OO_{n+1,\0}}{\left\langle \displaystyle \frac{df}{f}(\widetilde{v}), v_1,\ldots, v_{n+1}\right\rangle} +\tau_\0(V).
\end{eqnarray}
\noindent Since $\left\langle  \frac{df}{f}(\widetilde{v}), v_1,\ldots, v_{n+1}\right\rangle  \supset \left\langle \displaystyle v_1,\ldots, v_{n+1}\right\rangle$, we have that
$$
\displaystyle {\rm dim}_{\CC}\frac{\OO_{n+1,\0}}{\left\langle v_1,\ldots, v_{n+1}\right\rangle} \geq   {\rm dim}_{\CC}\frac{\OO_{n+1,\0}}{\left\langle \displaystyle \frac{df}{f}(\widetilde{v}), v_1,\ldots, v_{n+1}\right\rangle},
$$
\noindent with equality holding if and only if
$$
\displaystyle \frac{df}{f}(\widetilde{v}) \in \left\langle \displaystyle v_1,\ldots, v_{n+1}\right\rangle.
$$

\noindent We will show that for even $n$, the condition $\displaystyle \frac{df}{f}(\widetilde{v}) \in \left\langle \displaystyle v_1,\ldots, v_{n+1}\right\rangle$ never holds. Consequently, we have
\begin{eqnarray}\label{eqqq}
\displaystyle {\rm dim}_{\CC}\frac{\OO_{n+1,\0}}{\left\langle v_1,\ldots, v_{n+1}\right\rangle} >   {\rm dim}_{\CC}\frac{\OO_{n+1,\0}}{\left\langle \displaystyle \frac{df}{f}(\widetilde{v}), v_1,\ldots, v_{n+1}\right\rangle},
\end{eqnarray}
\noindent which implies
$$
\mathrm{Ind}_{\rm GSV}(v,(V,\0)) \,\,\,\geq \,\,\,\,1  + \tau_\0(V).
$$

\noindent In fact, let $k(z): = \displaystyle\frac{df}{f}(\widetilde{v})(z)$ and assume $\displaystyle k(z) \in \langle v_1, \dots, v_{n+1} \rangle$. We  will show that this  leads to a contradiction.

Condition $\displaystyle k(z) \in \langle v_1, \dots, v_{n+1} \rangle$ implies the existence of holomorphic functions $A_1, \dots, A_{n+1} \in \mathcal{O}_{\mathbb{C}^{n+1}, \0}$ such that
\begin{equation}
    k(z) = \sum_{i=1}^{n+1} A_i(z) v_i(z).
\end{equation}
Substituting this into the tangency condition $v(f) = k(z)f$, we obtain:
\begin{eqnarray}\nonumber
     & &\sum_{i=1}^{n+1} v_i \frac{\partial f}{\partial z_i} = \left( \sum_{i=1}^{n+1} A_i v_i \right) f   \Longleftrightarrow \\\nonumber &&\sum_{i=1}^{n+1} v_i \left( \frac{\partial f}{\partial z_i} - A_i f \right) = 0 \Longleftrightarrow \\ && \langle v, G \rangle = 0, \label{098}
\end{eqnarray}
where $G(z)$ is the vector field with components $G_i = \frac{\partial f}{\partial z_i} - A_i f$.

\noindent {\bf Claim.} $G_1, \dots, G_{n+1} \in \OO_{n+1,\0}$ form a regular sequence. 

\noindent In order to prove the claim, let us consider the zero locus $Z(G) = \{z \in \mathbb{C}^{n+1} : G(z) = 0\}$. Assume, for the sake of contradiction, that $Z(G) \neq \{\0\}$. Then, for each $l= 1,2,3, \ldots$, there exist $z_l \in (Z(G) - \{\0\})\cap {\mathbb B(1/l)}$, where $\mathbb{B}(1/l)$ denotes the open ball centered at the origin with radius $1/l$. Consequently, $\0$ is an accumulation point of $Z(G) - \{\0\}$. By the Curve Selection Lemma (see \cite[Lemma 3.1]{mimi}), there exists a real analytic curve $\gamma: [0, \epsilon) \to \mathbb{C}^{n+1}$ such that $\gamma(0)=\0$ and, for all $t > 0$, $\gamma(t) \neq \0$ and $G(\gamma(t)) = 0$. 

\medskip

\noindent \underline{Case 1:} If $k(z) \equiv 0$, then we can consider $A_1(z), \dots, A_{n+1}(z) \equiv 0 $ and consequently
$$
G(z) = \nabla f(z).
$$
\noindent The condition $G(\gamma(t)) = \0$ means
\begin{eqnarray}\label{fghl}
    \nabla f(\gamma(t)) \equiv \0.
\end{eqnarray}

On the other hand, by considering the composition $h(t) = f(\gamma(t))$, the chain rule gives us
$$
    h'(t) = \frac{d}{dt} f(\gamma(t)) = \sum_{j=1}^{n+1} \frac{\partial f}{\partial z_j}(\gamma(t)) \gamma_j'(t) = \sum_{j=1}^{n+1} 0. \gamma_j'(t) = 0. 
$$
\noindent Therefore, $h(t) \equiv C$ (complex constant). But, since $h(0) = f(\gamma(0)) = f(\0) = 0$, we obtain $C = 0$.

Thus, we have
\begin{eqnarray}\label{fghl2}
f(\gamma(t)) = h(t) \equiv 0.
\end{eqnarray}

The relations (\ref{fghl}) and (\ref{fghl2}) tell us that 
$$
\gamma(t) \in \Sing(V), 
$$
\noindent a contradiction, since $\gamma(t) \neq \0$ for $t > 0$ and $\Sing(V) = \{\0\}$.

\medskip

\noindent \underline{ Case 2:} If $k(z) \not\equiv 0$,
the condition $G(\gamma(t)) = \0$ implies:
$$
    \nabla f(\gamma(t)) = f(\gamma(t)) A(\gamma(t)).
$$
Consider the composition $h(t) = f(\gamma(t))$. By the chain rule we have:
$$
    h'(t) = \frac{d}{dt} f(\gamma(t)) = \sum_{j=1}^{n+1} \frac{\partial f}{\partial z_j}(\gamma(t)) \gamma_j'(t).
$$
Using the relation $\frac{\partial f}{\partial z_j} = f A_j$ along the curve, we obtain:
\[
    h'(t) = \sum_{j=1}^{n+1} (f(\gamma(t)) A_j(\gamma(t))) \gamma_j'(t) = h(t) \cdot \langle A(\gamma(t)), \gamma'(t) \rangle_{\mathbb{C}}.
\]
This is a linear ordinary differential equation of the form $h'(t) = \beta(t) h(t)$, with initial condition $h(0) = f(\0) = 0$. By using the integrating factor $\mu(t) = exp(-\beta(t))$ we obtain the general solution
$$
h(t) = Cexp(\beta(t)), \,\,\,C\in \CC.
$$
\noindent The initial condition $h(0) = 0$ gives us $C=0$, and thus $h(t)\equiv 0$ is the unique solution. Consequently, $f(\gamma(t)) = 0$ for all $t$ and therefore   $\nabla f(\gamma(t)) = \0 \cdot A(\gamma(t))~=~0$.

Since $f$ has an isolated singularity at the origin, condition $\nabla f(z) = 0$ implies $z=\0$. Thus $\gamma(t) \equiv \0$, contradicting the assumption that $\gamma(t) \neq \0$ for $t > 0$.
We conclude that $Z(G) = \{\0\}$, and therefore $(G_1, \dots, G_{n+1})$ form a regular sequence in $\mathcal{O}_{\mathbb{C}^{n+1}, \0}$. The claim is proved.

Since $G$ is a regular sequence, the associated Koszul complex $K_\bullet(G)$ is exact: 
$$
 0  \longrightarrow \bigwedge^{n+1} \mathcal{O}_{\mathbb{C}^{n+1}, \0}^{n+1} \xrightarrow{d_{n+1}} \ldots\xrightarrow{d_3} \bigwedge^2 \mathcal{O}_{\mathbb{C}^{n+1}, \0}^{n+1} \xrightarrow{d_2} \bigwedge^1 \mathcal{O}_{\mathbb{C}^{n+1}, \0}^{n+1} \xrightarrow{d_1} \mathcal{O}_{\mathbb{C}^{n+1}, \0} \;.
$$
In particular, the sequence is exact at $\bigwedge^1 \mathcal{O}_{\mathbb{C}^{n+1}, \0}^{n+1}$, where $d_1(u) = \langle u, G \rangle$ for any vector field $u$, and $d_2$ maps a bivector to its contraction with $G$.

Let $\{e_1, \ldots, e_{n+1}\}$ denote the standard basis of the free module $\mathcal{O}_{\mathbb{C}^{n+1}, 0}^{n+1}$. Condition (\ref{098}) established that $\langle v, G \rangle = 0$, i.e., $v \in Ker(d_1)$. By exactness, $v \in Im(d_2)$ and, hence, there exist $\displaystyle\theta = \sum_{1\leq i < j \leq n+1}m_{ij}\,e_i\wedge e_{j}\in \bigwedge^2 \mathcal{O}_{\mathbb{C}^{n+1}, \0}^{n+1}$ such that $d_2(\theta) = v$. In terms of matrix representation we can write the condition $d_2(\theta) = v$ as
\begin{equation} \label{007}
    v(z) = M(z) G(z),
\end{equation}
where $M(z) = (\widehat{m_{ij}}(z))$ is the skew-symmetric $(n+1) \times (n+1)$ matrix defined by
$$
\displaystyle \widehat{m_{ij}}(z) = \left\{
\begin{array}{lll}
\displaystyle m_{ij}(z) ,\,\,\mbox{if $i<j$ }\\
\displaystyle 0 ,\,\,\mbox{if $i=j$ }\\
\displaystyle - m_{ji}(z) ,\,\,\mbox{if $i>j$ }\\
\end{array} \right.
$$

Similarly, by the exactness of the Koszul complex associated with $v$, the condition $\langle G, v \rangle = \langle v, G \rangle = 0$ implies the existence of another $(n+1) \times (n+1)$ skew-symmetric matrix $M'(z) = (m'_{ij}(z))$ such that
\begin{equation}\label{0078}
G(z) = M'(z)v(z).
\end{equation}

If all entries of $M$ are non-units (i.e., they belong to the maximal ideal $\mathfrak{m}$), then each entry of the product $MM'$ belongs to $\mathfrak{m}$. Substituting $G = M'v$ into the first relation $v = MG$, we obtain
$$
v = MM'v
$$
\noindent and, consequently, for all $i=1,\ldots,n+1$,
$$
v_i \in \mathfrak{m}\langle v_1, \ldots, v_{n+1} \rangle.
$$

\noindent Thus, we obtain the inclusion
$$
\langle v_1, \ldots, v_{n+1} \rangle \subset \mathfrak{m}\langle v_1, \ldots, v_{n+1} \rangle.
$$
\noindent Since the reverse inclusion is trivial, we have the equality
$$
\langle v_1, \ldots, v_{n+1} \rangle = \mathfrak{m}\langle v_1, \ldots, v_{n+1} \rangle,
$$
\noindent which, by Nakayama's Lemma, implies that
$$
\langle v_1, \ldots, v_{n+1} \rangle = 0.
$$
This contradicts the fact that $v$ is a regular sequence in $\mathcal{O}_{\mathbb{C}^{n+1},\0}$.

On the other hand, if at least one entry of $M$ is a unit in $\mathcal{O}_{\mathbb{C}^{n+1},\0}$, the fact that $M$ is skew-symmetric allows us to apply a sequence of elementary row and column operations to extract all symplectic blocks associated with the invertible entries. This yields the block matrix
$$
\begin{pmatrix}
J_{r} & 0 \\
0 & N 
\end{pmatrix},
$$
where $J_r = \mathrm{diag}(J,\dots,J)$ with
$$
J = \begin{pmatrix}
0 & 1 \\
-1 & 0 
\end{pmatrix},
$$
and $N$ is a skew-symmetric matrix of odd order $(n+1)-2r \ge 1$ whose entries belong to the maximal ideal $\mathfrak{m}$ (where $2r$ is the number of units among the entries of $M$). This sequence of elementary operations corresponds to the matrix product
$$
P^T M P = \begin{pmatrix}
J_{r} & 0 \\
0 & N 
\end{pmatrix}
$$
for some invertible matrix $P$ over $\mathcal{O}_{\mathbb{C}^{n+1},\0}$.

Since $P$ is invertible, the sequences $\widetilde{G} = P^{-1}G$ and $\widetilde{v} = P^T v$ remain regular. Furthermore, it follows from relation (\ref{007}) that
$$
\widetilde{v} = \begin{pmatrix}
J_r & 0 \\
0 & N
\end{pmatrix} \widetilde{G}.
$$
Writing $\widetilde{G} = (\widetilde{g}_1, \dots, \widetilde{g}_{2r}, G')^T$ and $\widetilde{v} = (\widetilde{u}_1, \dots, \widetilde{u}_{2r}, v')^T$, we obtain
$$
\widetilde{u}_{2i-1} = \widetilde{g}_{2i}, \qquad \widetilde{u}_{2i} = -\widetilde{g}_{2i-1}, \qquad \text{for } i = 1, \dots, r,
$$
and
$$
v' = NG'.
$$

Now, consider the quotient ring $R = \mathcal{O}_{\mathbb{C}^{n+1},\0} / J$, where 
$$
J = \langle \widetilde{g}_1, \dots, \widetilde{g}_{2r} \rangle = \langle \widetilde{u}_1, \dots, \widetilde{u}_{2r} \rangle
$$
(let $\overline{h}$ denote the equivalence class of a germ $h \in \mathcal{O}_{\mathbb{C}^{n+1},\0}$). Since $\widetilde{G}$ and $\widetilde{v}$ are regular sequences in $\mathcal{O}_{\mathbb{C}^{n+1},\0}$, it follows that $\overline{G'}$ and $\overline{v'}$ are regular sequences in the quotient ring $R$, satisfying
\begin{equation}\label{00789}
\overline{v'} = \overline{N} \,\overline{G'}.
\end{equation}

It follows from relation (\ref{0078}) that
\begin{equation}\label{007891}
\widetilde{G} = \widetilde{M'} \widetilde{v},
\end{equation}
where $\widetilde{M'} = P^{-1}M'(P^{-1})^T$. Passing to the quotient ring $R = \mathcal{O}_{\mathbb{C}^{n+1},\0} / J$, the first $2r$ components of $\widetilde{G}$ and $\widetilde{v}$ vanish. Consequently, relation (\ref{007891}) implies that
\begin{equation}\label{007892}
\overline{G'} = \overline{N'} \,\overline{v'},
\end{equation}
where $N'$ is the $(n+1-2r) \times (n+1-2r)$ lower-right block of the matrix $\widetilde{M'}$.

Using (\ref{00789}) and (\ref{007892}), we find
$$
\overline{v'} = (\overline{N} \,\overline{N'}) \overline{v'}.
$$

Because the block $N$ contains no invertible entries and $R$ is a local ring, the entries of $\overline{N}$ belong to the maximal ideal $\mathfrak{m}_{R}$ of $R$. In particular, the matrix product $\overline{N} \,\overline{N'}$ also has all its entries in $\mathfrak{m}_{R}$. Consequently,
$$
\langle \overline{v'} \rangle = \mathfrak{m}_{R} \langle \overline{v'} \rangle.
$$
By Nakayama's Lemma, this implies $\langle \overline{v'} \rangle = 0$, which contradicts the fact that $\overline{v'}$ is a regular sequence in $R$.

\bigskip

\noindent \underline{item (b):}  Suppose that there exists a holomorphic vector field $v$ on $V$ that extends to  a holomorphic vector field $\widetilde{v}$ in a neighborhood of the origin in the ambient space, with a non-degenerate isolated singularity at $\0$. We will show that $K = 1 + (-1)^n\tau_\0(V)$.\\\\
\noindent For odd $n$: Since $\0$ is a non-degenerate singularity of $\widetilde{v}$, then
$$
{\rm dim}_{\CC}\frac{\OO_{n+1,\0}}{\left\langle v_1,\ldots, v_{n+1}\right\rangle} = 1.
$$
\noindent Thus, the inclusion $\left\langle  f, v_1,\ldots, v_{n+1}\right\rangle\supset \left\langle \displaystyle v_1,\ldots, v_{n+1}\right\rangle$ implies that
$$
1 = \displaystyle {\rm dim}_{\CC}\frac{\OO_{n+1,\0}}{\left\langle v_1,\ldots, v_{n+1}\right\rangle} \geq  {\rm dim}_{\CC}\frac{\OO_{n+1,\0}}{\left\langle \displaystyle f, v_1,\ldots, v_{n+1}\right\rangle} \geq 1,
$$
\noindent and, consequently,
$$
{\rm dim}_{\CC}\frac{\OO_{n+1,\0}}{\left\langle \displaystyle f, v_1,\ldots, v_{n+1}\right\rangle} =1.
$$
\noindent Hence, we get

$$
\mathrm{Ind}_{\rm GSV}(v,(V,\0)) \,\,\,= \,\,\,\, 1 - \tau_\0(V).
$$

\noindent For even $n$: Since $\0$ a non-degenerate singularity of $\widetilde{v}$,  it follows from
(\ref{eqqq}) that 
\begin{eqnarray}\nonumber
1 >   {\rm dim}_{\CC}\frac{\OO_{n+1,\0}}{\left\langle \displaystyle \frac{df}{f}(\widetilde{v}), v_1,\ldots, v_{n+1}\right\rangle},
\end{eqnarray}
\noindent which implies
\begin{eqnarray}\label{eeet}
{\rm dim}_{\CC}\frac{\OO_{n+1,\0}}{\left\langle \displaystyle \frac{df}{f}(\widetilde{v}), v_1,\ldots, v_{n+1}\right\rangle} = 0. 
\end{eqnarray}

\noindent Thus, by formula ({\ref{eq888}) we obtain 
\begin{eqnarray}\nonumber
\mathrm{Ind}_{\rm GSV}(v,(V,\0)) =  1 +\tau_\0(V). 
\end{eqnarray}

\begin{remark}\label{rem00111}
In particular, relation (\ref{eeet}) tell us that $\displaystyle \frac{df}{f}(\widetilde{v})(\0) \neq 0$, wich implies that $(V,\0)$ is a weighted homogeneous germ (see K. Saito \cite{saito}). 
\end{remark}

\noindent \underline{item (c):}} Suppose that $n$ is odd. If $K = 1 -\tau_\0(V)$, then there exists a holomorphic vector field 
$v$ on $V$ such that 
$\mathrm{Ind}_{\rm GSV}(v,(V,\0)) \,\,\,= \,\,\,\, 1 - \tau_\0(V)$. Thus, given $\widetilde{v} = (v_1,\ldots,v_{n+1})$ a holomorphic extensions of $v$, with an isolated singularity at $\0$, it follows from formula (\ref{eq88}) that 
$$
{\rm dim}_{\CC}\frac{\OO_{n+1,\0}}{\left\langle \displaystyle f, v_1,\ldots, v_{n+1}\right\rangle} =1.
$$
\noindent Consequently, the ideal $\left\langle \displaystyle f, v_1,\ldots, v_{n+1}\right\rangle$ coincides with the maximal ideal $\mathfrak{m} = \left\langle \displaystyle x_1, \ldots, x_{n+1}\right\rangle $ and we obtain
$$
\left\langle \displaystyle \overline{f}, \overline{v_1},\ldots, \overline{v_{n+1}}\right\rangle = \frac{\mathfrak{m}}{\,\,\mathfrak{m}^2}.
$$
\noindent Since 

$$
f(\0)= \displaystyle \frac{\partial f}{\partial x_1}(\0), \ldots, \frac{\partial f}{\partial x_{n+1}}(\0) = 0,
$$ 

\noindent we have that $f\in \mathfrak{m}^2$ (i.e. $\overline{f} = \overline{0}$) and therefore
$$
\left\langle \displaystyle  \overline{v_1},\ldots, \overline{v_{n+1}}\right\rangle = \left\langle \displaystyle \overline{f}, \overline{v_1},\ldots, \overline{v_{n+1}}\right\rangle = \frac{\mathfrak{m}}{\,\,\mathfrak{m}^2}.
$$
\noindent Hence, by Nakayama's lemma, we obtain
$$
\left\langle \displaystyle  v_1,\ldots, v_{n+1}\right\rangle = \mathfrak{m},  
$$
\noindent or, equivalently,   

$$
{\rm dim}_{\CC}\frac{\OO_{n+1,\0}}{\left\langle v_1,\ldots, v_{n+1}\right\rangle} =1.
$$
\noindent Thus, $\0$ is a non-degenerate isolated singularity of $\widetilde{v}$.

\noindent $\square$

\begin{remark}\label{rem001}
\noindent {\bf (i)} The number $ {\rm dim}_{\CC}\frac{\OO_{n+1,\0}}{\left\langle f,v_1,\ldots, v_{n+1}\right\rangle}$ on the right in (\ref{eq88}) is now called the {\it Tjurina number of the vector field} $\widetilde{v}$ (see for example \cite{Cano} and \cite{Arturo}), so the GSV-index is in this case, the difference of the Tjurina number of $\tilde v$ minus that of $V$.

\medskip

\noindent {\bf (ii)}  In \cite[Theorem 2, Corolary 4.1]{DSM2} the lower bound $Ind_{GSV}(v,(V,\0)) \geq \tau_\0(V) $ was obtained in the case $n$ is even.
\end{remark}

\section{Proof of Theorem 1}

All statements of Theorem 1 are proved in Lemma \ref{lem001}, with the exception of the final assertion of item (ii), which is proved below.

 Assume $n$ is odd and let $\Theta'$ denote the set of holomorphic vector fields on $V$ with the smallest index. According to \cite[Theorem 2.2]{bonatti} (see also \cite[Theorem 5.3]{seade}), the set $\Theta'$ forms a dense open subset of $\Theta$.

Suppose that there exists a holomorphic vector field $v\in \Theta$ that extends to a neighborhood of the origin in the ambient space with a non-degenerate isolated singularity at $\0$. 
 Since $n$ is odd, by Lemma \ref{lem001}(c), we have
\begin{equation*}
\mathrm{Ind}_{\rm GSV}(v,(V,\0)) = 1 -\tau_\0(V),
\end{equation*}

\noindent which is the smallest value that the GSV index can assume, since $K \geq 1 -\tau_\0(V)$ (see Lemma \ref{lem001}(a)). Thus, $\Theta'$ can be described as
\begin{equation*}
\Theta' = \{v\in \Theta:  \mathrm{Ind}_{\rm GSV}(v,(V,\0)) = 1 -\tau_\0(V)
\end{equation*}
and, using Lemma \ref{lem001}(c) again, we conclude that $\Theta'$ is exactly the set of holomorphic vector fields on the germ $(V,\0)$ that extend to a neighborhood of the origin in the ambient space with a non-degenerate isolated singularity at $\0$.

\section{Proof of Theorem 2}

For even $n$: suppose $v$ is a holomorphic vector field on $V$ admitting a holomorphic extension $\widetilde{v} = (v_1,\ldots, v_{n+1})$ to a neighborhood of $\0$ in the ambient space, with a non-degenerate isolated singularity at $\0$. Then  according to (\ref{eeet})
\begin{eqnarray}\nonumber
{\rm dim}_{\CC}\frac{\OO_{n+1,\0}}{\left\langle \displaystyle \frac{df}{f}(\widetilde{v}), v_1,\ldots, v_{n+1}\right\rangle} = 0. 
\end{eqnarray}
It follows from Remark \ref{rem00111}  that $(V,\0)$ is weighted homogeneous.

For odd $n$: let $v$ be a holomorphic vector field on the germ $(V,\0)$ which is everywhere
transversal to the link $L_V$. We know that (see, for example, \cite[Theorem 3.2.1]{BarSeaSuw})
$$
\mathrm{Ind}_{\rm GSV}(v,(V,\0)) = 1 -\mu_\0(V),
$$
\noindent where $\mu_\0(V)$ denotes the Milnor number of $(V,\0)$. Since $\mu_\0(V) \geq \tau_\0(V)$, we have that 
\begin{eqnarray}\nonumber
\mathrm{Ind}_{\rm GSV}(v,(V,\0)) = 1 - \mu_\0(V) \leq 1 - \tau_\0(V).
\end{eqnarray}

\noindent On the other hand, Lemma \ref{lem001}(a) tells us that
\begin{eqnarray}\nonumber
\mathrm{Ind}_{\rm GSV}(v,(V,\0)) = 1 - \mu_\0(V) \geq K \geq 1 - \tau_\0(V).
\end{eqnarray}

\noindent Therefore, we conclude that

\begin{eqnarray}\nonumber \label{nmm}
\mu_\0(V) = \tau_\0(V),
\end{eqnarray}

\noindent i.e.  $(V,\0)$ is a weighted homogeneous germ.



\section{Example of a family of vector fields with constant GSV}
We have established that for isolated singularities, the GSV index is bounded below by $1 + (-1)^n \tau_{\0}(V)$. However, if $(V, \0)$ is not weighted homogeneous, this lower bound is typically not attained. A natural question arises: what is the behavior of the index in the non-weighted homogeneous case, and how far is the ``generic index'' from this limit? While a complete answer falls outside the scope of this paper, we illustrate this behavior with a concrete example, demonstrating that the index remains locally constant within a  family of perturbations.

Let $V = \{f=0\} \subset (\mathbb{C}^3, \0)$ be the non-weighted homogeneous hypersurface defined by the germ:
\[ f(x,y,z) = z^2 + x^3 + y^7 + xy^5 \]
and consider the tangent vector field $\widetilde{v} \in \mathrm{Der}(-\log V)$:
\[ \widetilde{v}= (v_1,v_2,v_3) = (z^2 + x^3 + y^7 + xy^5) \partial_x + (2z)\partial_y - (7y^6+5xy^4)\partial_z. \]

In this setting, the singular set of $V$ is $\Sing(V) = \{\0\}$. The restriction $v := \widetilde{v}|_{V}$ has an isolated singularity on $V$ at the origin. For this hypersurface, the Tjurina number is $\tau_{\0}(V) = 11$ and the Milnor number is $\mu_{\0}(V) = 12$. 

Since $\frac{df}{f}(\widetilde{v}) = \frac{\partial f}{\partial x}$, we obtain  
$$
{\rm dim}_{\CC}\frac{\OO_{(\mathbb{C}^3, \0)}}{\left\langle \displaystyle \frac{df}{f}(\widetilde{v}), v_1,v_2,v_3\right\rangle} = {\rm dim}_{\CC}\frac{\OO_{(\mathbb{C}^3, \0)}}{\left\langle \displaystyle \frac{\partial f}{\partial x}, f, \frac{\partial f}{\partial z} ,\frac{\partial f}{\partial y}\right\rangle} = \tau_\0(V)
$$

\noindent and, by applying the G\'omez-Mont formula (\ref{eq888}),

\begin{eqnarray}\nonumber
\mathrm{Ind}_{\rm GSV}(v,(V,\0)) =  \displaystyle {\rm dim}_{\CC}\frac{\OO_{(\mathbb{C}^3, \0)}}{\left\langle v_1,v_2,v_3\right\rangle} = 18.
\end{eqnarray}

\noindent Note that $\mathrm{Ind}_{\rm GSV}(v,(V,\0)) = 18 > 1 + \tau_{\0}(V)$, confirming the gap expected for non-weighted homogeneous germs.

Furthermore, we define a family of tangent vector fields $\widetilde{v}_{\lambda} := \lambda \widetilde{v}$ for $\lambda \in \mathbb{C}^*$. Since $\lambda \neq 0$ is a unit in $\mathcal{O}_{(\mathbb{C}^3, \0)}$, the ideals defining the index:
\[ \langle \lambda v_1, \lambda v_2, \lambda v_3 \rangle \quad \text{and} \quad \left\langle \frac{df}{f}(\lambda\widetilde{v}), \lambda v_1, \lambda v_2, \lambda v_3 \right\rangle \]
are independent of the parameter $\lambda$. Consequently, the GSV index is constant along this family:
\[ \mathrm{Ind}_{\rm GSV}(\lambda v,(V,\0)) = \mathrm{Ind}_{\rm GSV}(v,(V,\0)) = 18, \quad \forall \lambda \in \mathbb{C}^*. \]

\hyphenation {know-ledge}

\section{Global Consequences of the Local Theory}\label{global}


Let $V$ be a compact complex hypersurface with isolated singularities $p_1,\ldots,p_{s}$ in a complex manifold $X$. For each singular point $p_i$, we denote by $K_{p_i}$ the smallest index ${\rm Ind}_{{\rm GSV}}(v,(V,p_i))$, for every germ $v$ of holomorphic vector field on $(V,p_i)$. The sum $\sum K := \sum_{i=1,\ldots, s} K_{p_i}$ determines an obstruction for the existence of global holomorphic vector fields on $V$: 

\medskip

\begin{teo3}
Let $V \subset \mathbb{P}^{n+1}$ be a complex projective hypersurface with isolated singularities. Suppose that $V$ admits a global holomorphic vector field with isolated singularities, singular at each singularity of $V$ (and possibly at some other isolated points in the smooth part of $V$). Then the following inequality holds:
$$
\sum_{l=1}^{n+1}\left[1-(1-{\rm deg}(V))^{l}\right] \geq \sum K.
$$
\end{teo3}
\noindent {\it Proof.} Let $V$ have degree $k$ and singular set  ${\rm Sing}(V) = \{p_1,\ldots, p_s\}$.  Suppose $v$ is a global holomorphic vector field on $V$ such that its singular set $\text{Sing}(v)$ consists of the points $\{p_1, \dots, p_s\}$ and (possibly) a finite set of points $\{q_1, \dots, q_l\}$ in the smooth locus $V_{reg}$. Then, the sum of the local GSV indices at $p_i$ and the Poincar\'e-Hopf indices at $q_j$ is determined by the $n$-th Chern class of the virtual tangent bundle (cf. \cite[Theorem 5.6.3]{BarSeaSuw}) ):
$$
\sum_{i=1}^s \text{Ind}_{\text{GSV}}(v, (V, p_i)) + \sum_{j=1}^l \text{Ind}_{\text{PH}}(v, q_j) = \int_V c_n(T\mathbb{P}^{n+1} - [V]),
$$
\noindent where $[V] \cong \mathcal{O}_{\PP^{n+1}}(k)$ denotes the line bundle of $V$ in $\mathbb{P}^{n+1}$.

 Since $c_1([V])$ is the Poincar\'e dual of the fundamental class of $V$, we can compute the $n$-th Chern class as follows:
\begin{eqnarray}\nonumber
\int_V c_n(T\mathbb{P}^{n+1} - [V]) &=& \int_{\PP^{n+1}} c_n(T\mathbb{P}^{n+1} - [V])c_1([V])\\\nonumber &=& \int_{\PP^{n+1}} \left(\sum_{l=0}^nc_{n-l}(T\mathbb{P}^{n+1})c_1([V]^{\ast})^l\right)c_1([V]) \\\nonumber &=& \int_{\PP^{n+1}} \sum_{l=0}^nc_{n-l}(T\mathbb{P}^{n+1})(-1)^lc_1([V])^{l+1}\\\nonumber &=&  \sum_{l=0}^n\binom{n + 2}{n-l}(-1)^l(k)^{l+1}\int_{\PP^{n+1}}c_1(\mathcal{O}_{\PP^{n+1}}(1))^{n+1}
\end{eqnarray}
\noindent where in the last step we have used the fact that 
$$
\displaystyle c_{n-l}(T\PP^{n+1}) =\binom{n + 2}{n-l}c_{1}(\OO_{\PP^{n+1}}(1))^{n-l}
$$ 
\noindent and  
$$
c_1([V])= c_1(\mathcal{O}_{\PP^{n+1}}(k)) = kc_1(\OO_{\PP^{n+1}}(1).
$$ 

Hence, since $\displaystyle  \int_{\PP^{n+1}}c_1(\OO_{\PP^{n+1}}(1))^{n+1} = 1$, we obtain
\begin{eqnarray}\nonumber
\int_V c_n(T\mathbb{P}^{n+1} - [V]) =  \sum_{l=0}^n\binom{n + 2}{n-l}(-1)^l(k)^{l+1} = \sum_{l=1}^{n+1}(1-(1-k)^{l}).
\end{eqnarray}

Thus, we get
$$
\sum K \leq \sum_{i=1}^s \text{Ind}_{\text{GSV}}(v, (V, p_i)) + \sum_{j=1}^l \text{Ind}_{\text{PH}}(v, q_j) = \sum_{l=1}^{n+1}(1-(1-k)^{l}).
$$

\noindent $\square$

\begin{cor2}
Let $V \subset \mathbb{P}^{n+1}$ be a complex projective hypersurface with isolated singularities, where $n$ is odd. Suppose that $V$ admits a global holomorphic vector field with isolated singularity, singular at each singularity of $V$ (and possibly at some other isolated points in the smooth part of $V$). Then the following inequality holds:
$$
(n+1) - \sum_{l=1}^{n}(1-{\rm deg}(V))^{l} \geq \# {\rm Sing}(V).
$$
\end{cor2}

\noindent {\bf Proof.} Suppose $v$ is a global holomorphic vector field on $V$ with isolated singularities. By using item (i) of Theorem 1, we obtain
$$
\sum_{p\in \Sing(V)}\hspace{-0.4cm} \text{Ind}_{\text{GSV}}(v, (V, p)) \geq \# {\rm Sing}(V)\,\, - \hspace{-0.3cm} \sum_{p\in \Sing(V)}\hspace{-0.359cm}\tau_p(V) \geq \# {\rm Sing}(V) \,\, - \hspace{-0.3cm} \sum_{p\in \Sing(V)}\hspace{-0.359cm} \mu_p(V),
$$

\noindent consequently, it follows from the Theorem 4 that 

$$
\sum_{l=1}^{n+1}(1-(1-{\rm deg}(V))^{l}) \geq \# {\rm Sing}(V) \,\, - \hspace{-0.3cm} \sum_{p\in \Sing(V)}\hspace{-0.359cm} \mu_p(V)
$$
\noindent or, equivalently, 
$$
(n+1) -  \sum_{l=1}^{n}(1-{\rm deg}(V))^{l} + \hspace{-0.3cm} \sum_{p\in \Sing(V)}\hspace{-0.359cm} \mu_p(V) - (1-{\rm deg}(V))^{n+1} \geq \# {\rm Sing}(V).
$$
\noindent Since (see \cite{dp})
$$
\sum_{p\in \Sing(V)}\hspace{-0.359cm} \mu_p(V) - (1-{\rm deg}(V))^{n+1} = - {\rm pol}(V) \leq 0
$$
\noindent where ${\rm pol}(V)$ denotes the polar degree of $V$, we obtain the desired inequality
$$
(n+1) - \sum_{l=1}^{n}(1-{\rm deg}(V))^{l} \geq \# {\rm Sing}(V).
$$
\noindent $\square$

\begin{exe}
Let $\PP^2$ be the complex projective space of dimension 2, with homogeneous coordinates $(X:Y:Z)$, and let $V\subset \PP^2$ be the irreducible curve defined by 
$$
X^6+Y^6+Z^6-2X^3Y^3-2Y^3Z^3-2Z^3X^3 = 0,
$$ 
\noindent whose singular set is 
$$
{\rm Sing}(V) = \{(0:1:Z), (1:0:Z), (1:Z:0)/ Z^3 = 1\}.
$$
\noindent In this case, we have
$$
(n+1) - \sum_{l=1}^{n}(1-{\rm deg}(V))^{l} = 7\,\,\,\,\,\mbox{and}\,\,\,\,\,\#{\rm Sing}(V) = 9.
$$
\noindent Consequently,  according to Corolary 1, $V$ does not admit a global holomorphic vector field with isolated singularity.
\end{exe}

\section{In the low dimensions}\label{s. low}

\subsection{Singular curves}
We know that the classification of non-singular compact curves is governed entirely by the geometric genus $g$, partitioning them into three main classes: rational  ($g=0$), elliptic  ($g=1$), and general type ($g \geq 2$).


\medskip
In the singular case,
as an application of the preceding results, we obtain the following complete topological classification:

\begin{teo4}Let $V$ be an irreducible singular complex curve in a compact complex surface. If $V$ admits a non-trivial global holomorphic vector field with zeros, then $V$ is a rational curve, with at most two singular points.
\end{teo4}

\noindent{\bf Proof.} Suppose that $V$ admits a global holomorphic vector field $v$, with isolated singularities. According to \cite[Theorem 3.2.2]{BarSeaSuw}, 
\begin{eqnarray}\nonumber
\sum_{p\in {\rm Sing}(V)}\hspace{-0.3cm}{\rm Ind}_{{\rm GSV}}(v,(V,p)) + \sum_{q\in {\rm Sing}(v) \cap V_{reg}}\hspace{-0.7cm}{\rm Ind}_{{\rm PH}}(v,q) = \chi(V) - \hspace{-0.3cm} \sum_{p\in {\rm Sing}(V)}\hspace{-0.3cm} \mu_{p}(V), 
\end{eqnarray}
\noindent where ${\rm Ind}_{{\rm PH}}(v,q)$ denotes the classical Poincar\'e-Hopf index of $v$ at $q$. Since each ${\rm Ind}_{{\rm PH}}(v,q)$ a positive number, we obtain
\begin{eqnarray}\label{ff}
&& \,\,\,\,\ \sum K \leq \sum_{p\in {\rm Sing}(V)}{\rm Ind}_{{\rm GSV}}(v,(V,p)) \leq \chi(V) - \sum_{p\in {\rm Sing}(V)}\mu_{p}(V). 
\end{eqnarray} 

By using item (i) of Theorema 1, we get
\begin{eqnarray}\label{fff}
\# {\rm Sing}(V) \,\, - \hspace{-0.3cm} \sum_{p\in \Sing(V)}\hspace{-0.359cm} \mu_p(V) \leq \# {\rm Sing}(V)\,\, - \hspace{-0.3cm} \sum_{p\in \Sing(V)}\hspace{-0.359cm}\tau_p(V) \leq \sum K,
\end{eqnarray}
and relations (\ref{ff}) and (\ref{fff}) tell us that 
$$
\# {\rm Sing}(V) \leq \chi(V).
$$
 On the other hand, we know that 
$$
\chi(V) = 2 -2g - \displaystyle\sum_{p\in \Sing(V)}(r_p - 1),
$$ where $r_p$ denotes the number of analytic branches (local irreducible components) of the germ $(V,p)$. Thus, we obtain

$$
1 \leq \# {\rm Sing}(V) \leq \# {\rm Sing}(V) + \displaystyle\sum_{p\in \Sing(V)}(r_p - 1) \leq 2 -2g,
$$
\noindent which implies that $g=0$ and $\# {\rm Sing}(V) \leq 2$

\noindent $\square$

\begin{remark}
If $v$ is the restriction of a global holomorphic vector field on the ambient manifold, Theorem 4 recovers a weaker version of the theorem of Miyaoka \cite{miy} (see \cite[p. 82]{Bru}).
\end{remark}

\subsection{Surfaces in K{\"a}hler 3-folds}
We now let $V$ be a compact hypersurface with isolated singularities in a compact non-singular 3-fold $X$.

Since $V$ is a normal compact complex surface, the Euler characteristic of its structure sheaf $\mathcal{O}_V$ can be written in terms of its classical analytic invariants, namely the irregularity $q(V) = \dim H^1(V, \mathcal{O}_V)$ and the geometric genus $g(V) = \dim H^2(V, \mathcal{O}_V)$:$$ \chi(\mathcal{O}_V) = 1 - q(V) + g(V). $$On the other hand, applying the Baum-Fulton-MacPherson Riemann-Roch Theorem \cite{BFM} to the structure sheaf $\mathcal{O}_V$, we have:$$ \chi(\mathcal{O}_V) = \int_V \text{td}_2(TX - [V]) = \frac{1}{12} \left( \mathcal{K}_{vir}^2 + \int_V c_2(TX - [V]) \right), $$where $\text{td}_2$ denotes the degree 2 component of the Todd class and $\mathcal{K}_{vir}$ is the virtual canonical class. Combining these equations, we have the following formula:\begin{equation} \label{fo1}\int_V c_2(TX - [V]) = 12(1 - q(V) + g(V)) - \mathcal{K}_{vir}^2,\end{equation} which will be used in the proof of the following theorem. We assume $X$ is a compact  K{\"a}hler 3-fold and $V \subset X$ a hypersurface such that the adjoint line bundle $\mathcal{O}_X(K_X + V)$ is nef in the analytic sense.

\begin{teo5} Let $X$ be a compact K{\"a}hler 3-fold and let $V \subset X$ be a hypersurface with isolated singularities. Assume that the adjoint line bundle $\mathcal{O}_X(K_X + [V])$ is nef on $X$. If $V$ admits a non-trivial global holomorphic vector field with isolated zeros, then$$ g(V) \geq q(V). $$
\end{teo5}

\noindent{\bf Proof.} Suppose that $V$ admits a global holomorphic vector field $v$ with isolated singularities. According to \cite[Ch.\ IV, Corollary 3.8]{Suw2},$$\sum_{p\in \mathrm{Sing}(V)\cup \mathrm{Sing}(v)} \mathrm{Ind}_{\mathrm{GSV}}(v,(V,p))  = \int_V c_2(TX - [V]). $$Since $\dim(V)$ is even, each index $\mathrm{Ind}_{\mathrm{GSV}}(v,(V,p))$ is a positive integer (cf.\ \cite[Theorem 2]{DSM2}). Consequently, we have$$0 < \int_V c_2(TX - [V]) = 12(1 - q(V) + g(V)) - \mathcal{K}_{vir}^2.$$Hence, we obtain$$\frac{1}{12}\mathcal{K}_{vir}^2 < 1 - q(V) + g(V).$$By hypothesis, the adjoint bundle $\mathcal{O}_X(K_X + [V])$ is nef on $X$, which guarantees that $\mathcal{K}_{vir}^2 \geq 0$. Therefore, it follows that$$0 \leq \frac{1}{12}\mathcal{K}_{vir}^2 < 1 - q(V) + g(V).$$Since $q(V)$ and $g(V)$ are integers, the strict inequality $0 < 1 - q(V) + g(V)$ implies that $g(V) - q(V) \geq 0$. This concludes the proof. \hfill $\square$

\begin{remark}
Inequality $g(V) \geq q(V)$ is not a general topological property of compact surfaces (for instance, ruled surfaces over curves of genus $h \geq 1$ satisfy $q > g$). Theorem 5 provides a strong geometric obstruction: the existence of a holomorphic vector field with isolated singularities on $V$, bounded by a nef adjoint ambient condition, structurally forbids the hypersurface from having excess irregularity.
\end{remark}


{\it Acknowledgments.} Both authors are grateful to M. Corr\^ea for pointing out some corrections needed in the original version of the article.

\end{document}